\documentclass{jloganal}

\usepackage{mathrsfs}
\usepackage{amsfonts}
\usepackage{amsbsy}
\usepackage{bm}
\def\boldfacedelta{\boldsymbol{\Delta}_2^0}

\title{A Cantor-Bendixson-like process which detects $\boldfacedelta$}

\author{Samuel Alexander}
\givenname{Samuel}
\surname{Alexander}
\address{231 W.~18th Avenue, Columbus, OH 43210}
\email{alexander@math.ohio-state.edu}
\urladdr{http://www.math.osu.edu/~alexander}

\keywords{descriptive set theory, Cantor-Bendixson, Borel hierarchy, guessability}
\subject{primary}{msc2000}{03E15}
\subject{secondary}{msc2000}{28A05}

\volumenumber{}
\issuenumber{}
\publicationyear{}
\papernumber{}
\startpage{}
\endpage{}
\doi{}
\MR{}
\Zbl{}
\received{}
\revised{}
\accepted{}
\published{}
\publishedonline{}
\proposed{}
\seconded{}
\corresponding{}
\editor{}
\version{}

\theoremstyle{plain}
\newtheorem{definition}{Definition}
\newtheorem{theorem}[definition]{Theorem}
\newtheorem{lemma}[definition]{Lemma}

\newtheorem{proposition}[definition]{Proposition}

\def\N{\mathbb{N}}
\def\rest{\restriction}

\begin{document}

\section{Addendum}

\begin{abstract}

For each subset of Baire space, we define, in a way similar to
a common proof of the Cantor-Bendixson Theorem, a sequence of decreasing subsets $S_{\alpha}$
of $\N^{<\N}$, indexed by ordinals.  We use this to obtain two new characterizations of the boldface
$\boldfacedelta$ Borel pointclass.  ADDENDUM: In January 2012 we learned that the notion of
guessability appeared in an equivalent form, and even with the same name, in the doctoral dissertation
of William Wadge \cite{wadge}.  As for the main result of this paper, Wadge proved one direction
and gave a proof for the other direction which he attributed to Hausdorff.  The proofs in this paper
present an alternate means to those results.
\end{abstract}

\maketitle

Please read the addendum in the above abstract for an important note on this paper's unoriginality.

The usual Cantor-Bendixson derivative ``detects'' countability, in the sense
that the perfect kernel of $S\subseteq\N^{\N}$ (the result of applying the Cantor-Bendixson derivative
repeatedly until a fixed point is reached) is empty if and only if $S$ is countable (\cite{kechris}, page 34).
In this paper, I will show a process which ``detects'' $\boldfacedelta$:
a process which depends on $S\subseteq\N^{\N}$ and which reaches a fixed point or kernel,
a kernel which will be empty if and only if $S$ is $\boldfacedelta$.

\begin{definition}
Suppose $S\subseteq\N^{\N}$.  If $X\subseteq\N^{<\N}$, let $[X]$ denote the set of infinite sequences whose
initial segments are all in $X$.
\begin{itemize}
\item
Define $S_{\alpha}\subseteq \N^{<\N}$ for every ordinal $\alpha$ by induction as follows:
$S_0=\N^{<\N}$,
$S_{\lambda}=\cap_{\beta<\lambda}S_{\beta}$ for any limit ordinal $\lambda$.
And finally, for any ordinal $\beta$ define
\[
S_{\beta+1}
=
\{
 x\in S_{\beta}\, :\, \mbox{$\exists x',x''\in [S_{\beta}]$ such that $x\subseteq x'$, $x\subseteq x''$, $x'\in S$, and $x''\not\in S$}
\}
\]
\item
Let $\alpha(S)$ be the minimal ordinal $\alpha$ such that $S_{\alpha}=S_{\alpha+1}$.
\item
Let $S_{\infty}=S_{\alpha(S)}$ (the \emph{kernel} of the above process).
\end{itemize}
\end{definition}

Throughout the paper, $S$ will denote a subset of $\N^{\N}$.
If $f:\N\to\N$ and $n\in\N$, I will use $f\rest n$ to denote $(f(0),\ldots,f(n-1))$; $f\rest 0$ will denote
the empty sequence.
My goal is to prove that the following are equivalent:
\begin{itemize}
\item $S$ is $\boldfacedelta$.
\item $S_{\infty}=\emptyset$.
\item $S=T\backslash [T_{\infty}]$ for some $T$.
\end{itemize}

The reader might wonder why I define $S_{\alpha}$ to lie in $\N^{<\N}$, rather than in $\N^{\N}$ as one might
expect by extrapolating from the classical Cantor-Bendixson derivative.  Why not define a new derivative
\[S^{*}=\{f\in S\,:\,\mbox{$f$ is a limit of points of $S\backslash\{f\}$ and also of $S^c$}\},\] and 
then follow
Cantor-Bendixson more directly?  If we do this, we end up getting a kernel which does \emph{not}
detect $\boldfacedelta$.  For example, let $S=\{f\,:\,\forall n\,f(n)\not=0\}$, a $\boldfacedelta$
subset of $\N^{\N}$.  It's easy to see $S^{*}=S$, whereas in order for our process to detect $\boldfacedelta$,
we would like for it to reduce $S$ to $\emptyset$.
The reader can check that $S_0=\N^{<\N}$, $S_1$ is the set of finite sequences
not containing $0$, and $S_2=\emptyset$.

\begin{definition}
\label{defofguessable}
Say that $S\subseteq\N^{\N}$ is  \emph{guessable} if there
is a function $G:\N^{<\N}\to\N$ such that for every $f:\N\to\N$,
\[
\lim_{n\to\infty} G(f\rest n) = \left\{\begin{array}{l}\mbox{$1$, if $f\in S$;}\\ 
\mbox{$0$, otherwise.}\end{array}\right.
\]
If so, we say $G$ is a \emph{guesser} for $S$.
\end{definition}

\begin{theorem}
\label{guesscharacterize}
A subset of $\N^{\N}$ is guessable if and only if it is $\boldfacedelta$.
\end{theorem}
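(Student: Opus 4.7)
My plan is to prove the two implications directly, using classical arguments rather than the $S_{\alpha}$ process.

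For \emph{guessable} $\Rightarrow$ $\boldfacedelta$, given a guesser $G$ I would write
\[
S=\bigcup_{N\in\N}\bigcap_{n\geq N}\{f\in\N^{\N}\,:\,G(f\rest n)=1\},
\]
and observe that each inner set is clopen since it depends only on $f\rest n$. This exhibits $S$ as $F_{\sigma}$, and the analogous formula with $0$ in place of $1$ does the same for $S^{c}$, so $S\in\boldfacedelta$.

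For the converse, I would write $S=\bigcup_{n} F_{n}$ and $S^{c}=\bigcup_{n} F'_{n}$ as increasing unions of closed sets, and pick trees $T_{n},T'_{n}\subseteq\N^{<\N}$ closed under initial segments with $F_{n}=[T_{n}]$ and $F'_{n}=[T'_{n}]$. The guesser is then built via a race: for $s\in\N^{<\N}$ let
\[
a(s)=\min\{n\,:\,s\in T_{n}\},\qquad b(s)=\min\{n\,:\,s\in T'_{n}\}
\]
with $\min\emptyset=\infty$, and set $G(s)=1$ iff $a(s)\leq b(s)$.

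To verify $\lim_{k}G(f\rest k)=\chi_{S}(f)$, I would use that both $a$ and $b$ are weakly increasing along any branch $f$ because the trees are closed under initial segments. If $f\in S$ with $f\in F_{n_{0}}$, then $a(f\rest k)\leq n_{0}$ for every $k$; meanwhile $f\notin F'_{m}$ for any $m$ forces $f\rest k$ to leave each $T'_{m}$ permanently, so $b(f\rest k)\to\infty$. Hence $a\leq b$ eventually and $G(f\rest k)\to 1$; the case $f\notin S$ is symmetric. The main design point, which I expect to be the crux of the proof, is choosing a tie-breaking rule that commits $G$ to the correct limit value on every branch simultaneously---the race between the $T_{n}$ and $T'_{n}$ achieves this, and the remainder is routine bookkeeping.
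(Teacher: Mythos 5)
Your proposal is correct, but note that the paper itself gives no proof of Theorem~\ref{guesscharacterize} at all: it imports the result from \cite{alexanderjis} (page 11) and \cite{alexanderpreprint}, and the addendum credits the underlying idea to Hausdorff and Wadge. So what you have written is a genuinely different contribution, namely a self-contained classical proof. The easy direction is fine: since $G$ takes values in $\N$, convergence of $G(f\rest n)$ means eventual constancy, so your displayed decomposition exhibits $S$ as $F_{\sigma}$, and the same formula with $0$ shows $S^{c}$ is $F_{\sigma}$, i.e.\ $S$ is also $G_{\delta}$, hence $\boldfacedelta$. The converse is the standard ``race'' argument: writing $S=\bigcup_n[T_n]$ and $S^c=\bigcup_n[T'_n]$ with $T_n,T'_n$ closed under initial segments (possible because every closed subset $F$ of Baire space equals $[T]$ for $T=\{f\rest n:f\in F\}$), the indices $a(f\rest k)$ and $b(f\rest k)$ are weakly increasing along a branch, the index on the side containing $f$ stays bounded, and the other diverges, exactly as you verify. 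Two small remarks: the tie-breaking rule you flag as the crux is in fact immaterial, since in each case the relevant inequality $a<b$ or $b<a$ eventually holds strictly, so any convention at ties yields the same limit; and the increasingness of the unions is likewise unnecessary. The trade-off between the two routes is simply that the paper's citation keeps it short and lets it focus on the $S_{\alpha}$ derivative, while your argument makes the equivalence of guessability with $\boldfacedelta$ available without consulting the references, essentially reconstructing the Hausdorff-style proof that Wadge's dissertation records.
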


This theorem is proved on page 11 of Alexander \cite{alexanderjis}.  It is also a special case of the main
theorem of Alexander \cite{alexanderpreprint}.

\begin{proposition}
\label{piece1}
Suppose $S$ is $\boldfacedelta$.  Then $S_{\infty}=\emptyset$.
\end{proposition}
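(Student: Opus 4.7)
The plan is to derive a contradiction from the hypothetical existence of some $x \in S_\infty$ by exhibiting an infinite path $f \in [S_\infty]$ along which the guesser provided by Theorem~\ref{guesscharacterize} fails to converge.

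First I would verify by transfinite induction that each $S_\alpha$ is closed under initial segments: the base case ($S_0 = \N^{<\N}$) and the limit case (intersection of trees) are automatic, and in the successor case any $y \subseteq x \in S_{\beta+1}$ lies in $S_\beta$ (being an initial segment of some $x' \in [S_\beta]$) and inherits from $x$ the same witnesses $x', x''$. Hence if $S_\infty \neq \emptyset$, then $\emptyset \in S_\infty$, giving a starting point $x_0 = \emptyset$ from which to build the desired path.

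Next, appealing to Theorem~\ref{guesscharacterize} to fix a guesser $G$ for $S$, I would recursively construct a strictly increasing sequence $x_0 \subsetneq x_1 \subsetneq \cdots$ in $S_\infty$ such that $G(x_n) = 1$ for even $n$ and $G(x_n) = 0$ for odd $n$. The engine of the construction is the fixed-point equation $S_\infty = S_{\alpha(S)+1}$: given $x_n \in S_\infty$, this yields $x', x'' \in [S_\infty]$ extending $x_n$ with $x' \in S$ and $x'' \notin S$. Since $G(x' \rest k) \to 1$ and $G(x'' \rest k) \to 0$, sufficiently long initial segments of $x'$ or $x''$ (each still in $S_\infty$) take whichever $G$-value the parity of $n+1$ demands and strictly extend $x_n$.

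Setting $f = \bigcup_n x_n$ then produces an element of $\N^\N$ for which $G(f\rest n)$ assumes both $0$ and $1$ infinitely often, contradicting $G$ being a guesser. The only nonroutine point is recognizing that the fixed-point condition, combined with the convergence guarantee of $G$, is exactly what forces the oscillation; once that is seen, the tree property and the recursion itself are straightforward.
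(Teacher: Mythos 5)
Your proposal is correct and follows essentially the same route as the paper: fix a guesser $G$ via Theorem~\ref{guesscharacterize}, use the fixed-point identity $S_{\infty}=S_{\alpha(S)+1}$ to repeatedly find extensions in $[S_{\infty}]$ inside and outside $S$, and extract strictly longer initial segments (still in $S_{\infty}$) forcing $G$ to oscillate along $f=\bigcup_n x_n$, contradicting convergence. Your preliminary downward-closure lemma is fine but unnecessary here, since one may simply start from an arbitrary element of $S_{\infty}$ rather than from $\emptyset$.
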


\begin{proof}
Contrapositively, suppose $S_{\infty}\not=\emptyset$.
I will show $S$ is non-guessable,
hence non-$\boldfacedelta$ by Theorem~\ref{guesscharacterize}.
Assume not, and let $G:\N^{<\N}\to\N$ be a guesser for $S$.
I will build a sequence 
on whose initial segments
$G$ diverges,
contrary to Definition~\ref{defofguessable}.
There is some $\sigma_0\in S_{\infty}$.
Now inductively suppose I've defined finite sequences
$\sigma_0\subset_{\not=} \cdots \subset_{\not=} \sigma_k$
in $S_{\infty}$
such that for $0<i\leq k$,
$G(\sigma_i)\equiv i\mbox{ mod 2}$.
Since $\sigma_k\in S_{\infty}=S_{\alpha(S)}=S_{\alpha(S)+1}$,
this means there are $\sigma',\sigma''\in [S_{\infty}]$,
extending $\sigma_k$, with $\sigma'\in S$, $\sigma''\not\in S$.
Choose $\sigma\in\{\sigma',\sigma''\}$
with $\sigma\in S$ iff $k$ is even.
Then $\lim_{n\to\infty}G(\sigma\rest n)\equiv k+1\mbox{ mod 2}$.
Let $\sigma_{k+1}\subset\sigma$ properly extend $\sigma_k$ such that $G(\sigma_{k+1})\equiv k+1\mbox{ mod 2}$.
Note $\sigma_{k+1}\in S_{\infty}$ since $\sigma\in [S_{\infty}]$.

By induction, I've defined $\sigma_0\subset_{\not=}\sigma_1\subset_{\not=}\cdots$ such
that for $i>0$, $G(\sigma_i)\equiv i\mbox{ mod 2}$.  This contradicts Definition~\ref{defofguessable}
since $\lim_{n\to\infty}G((\cup_i\sigma_i)\rest n)$ ought to converge.
\end{proof}

For the converse we need more machinery.

\begin{definition}
If $\sigma\in\N^{<\N}$, $\sigma\not\in S_{\infty}$, then let $\beta(\sigma)$
denote the least ordinal such that $\sigma\not\in S_{\beta(\sigma)}$.
\end{definition}

Note that whenever $\sigma\not\in S_{\infty}$, $\beta(\sigma)$ is a successor ordinal.

\begin{lemma}
\label{techlemma1}
Suppose $\sigma\subseteq\tau$ are finite sequences.  If $\tau\in S_{\infty}$
then $\sigma\in S_{\infty}$.  And if $\sigma\not\in S_{\infty}$, then $\beta(\tau)\leq 
\beta(\sigma)$.
\end{lemma}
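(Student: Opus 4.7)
The plan is to reduce both claims of the lemma to a single monotonicity statement proved by transfinite induction: for every ordinal $\alpha$, and every pair $\sigma \subseteq \tau$ of finite sequences, $\tau \in S_\alpha$ implies $\sigma \in S_\alpha$.

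I would carry out the induction on $\alpha$ as follows. The base case $\alpha = 0$ is trivial because $S_0 = \N^{<\N}$. The limit case $\alpha = \lambda$ is immediate from the definition $S_\lambda = \bigcap_{\beta<\lambda} S_\beta$: if $\tau$ lies in every earlier $S_\beta$, then by induction so does $\sigma$. For the successor step, suppose $\tau \in S_{\beta+1}$, witnessed by $\tau', \tau'' \in [S_\beta]$ with $\tau \subseteq \tau'$, $\tau \subseteq \tau''$, $\tau' \in S$, and $\tau'' \not\in S$. Then $\sigma \subseteq \tau \subseteq \tau'$ and $\sigma \subseteq \tau \subseteq \tau''$, so the very same $\tau', \tau''$ witness $\sigma \in S_{\beta+1}$, provided we already know $\sigma \in S_\beta$; but this follows from the inductive hypothesis applied to $\tau \in S_{\beta+1} \subseteq S_\beta$.

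From this monotonicity, both halves of the lemma fall out. The first half is the special case $\alpha = \alpha(S)$, since $S_\infty = S_{\alpha(S)}$. For the second half, I take the contrapositive of the monotonicity statement: if $\sigma \not\in S_\alpha$ then $\tau \not\in S_\alpha$. Applying this at $\alpha = \beta(\sigma)$ gives $\tau \not\in S_{\beta(\sigma)}$, so by minimality of $\beta(\tau)$ we conclude $\beta(\tau) \leq \beta(\sigma)$.

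There is no real obstacle here; the only subtle point is remembering, in the successor step, that the witnesses $\tau', \tau''$ for $\tau$ work verbatim for $\sigma$ once we have established $\sigma \in S_\beta$, so the induction does not need to produce new witnesses. Everything else is bookkeeping with the definitions.
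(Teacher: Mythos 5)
Your proof is correct and follows essentially the same route as the paper: a transfinite induction showing $\tau\in S_\alpha$ implies $\sigma\in S_\alpha$ for all $\alpha$, with the witnesses for $\tau$ at a successor stage reused verbatim for $\sigma$, and both claims of the lemma then read off (the paper leaves this last reduction implicit, while you spell out the contrapositive and the minimality of $\beta(\tau)$).
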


\begin{proof}
It's enough to show for any ordinal $\beta$ if $\tau\in S_{\beta}$ then $\sigma\in S_{\beta}$.
This is by induction on $\beta$, the limit case and $\beta=0$ case being trivial.
Assume $\beta$ is successor.
If $\tau\in S_{\beta}$, this means $\tau\in S_{\beta-1}$ and there are $\tau',\tau''\in [S_{\beta-1}]$ extending $\tau$ with $\tau'\in S$,
$\tau''\not\in S$.
Since $\tau'$ and $\tau''$ extend $\tau$, and $\tau$ extends $\sigma$, $\tau'$ and $\tau''$ extend $\sigma$,
and since $\sigma\in S_{\beta-1}$ by induction, this shows $\sigma\in S_{\beta}$.
\end{proof}

\begin{lemma}
\label{techlemmaX}
Suppose $f:\N\to\N$, $f\not\in [S_{\infty}]$.
Then there is some $i$ such that for all $j\geq i$,
$f\rest j\not\in S_{\infty}$ and $\beta(f\rest j)=\beta(f\rest i)$.
Furthermore, $f\in [S_{\beta(f\rest i)-1}]$.
\end{lemma}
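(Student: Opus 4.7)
The plan is to exploit two properties: the monotonicity provided by Lemma~\ref{techlemma1}, and the well-foundedness of the ordinals.

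First I would find a base index $i_0$ with $f\rest i_0\not\in S_\infty$, which exists since $f\not\in [S_\infty]$. Applying Lemma~\ref{techlemma1} to the extensions $f\rest i_0\subseteq f\rest j$ for $j\geq i_0$, the first half of that lemma immediately gives $f\rest j\not\in S_\infty$ for all $j\geq i_0$ (otherwise the initial segment $f\rest i_0$ would lie in $S_\infty$). So $\beta(f\rest j)$ is well defined for every $j\geq i_0$, and moreover, since $f\rest j\subseteq f\rest{j+1}$ and $f\rest j\not\in S_\infty$, the second half of Lemma~\ref{techlemma1} yields $\beta(f\rest{j+1})\leq\beta(f\rest j)$. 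Thus $(\beta(f\rest j))_{j\geq i_0}$ is a non-increasing sequence of ordinals, and by well-foundedness it must be eventually constant: there is some $i\geq i_0$ so that $\beta(f\rest j)=\beta(f\rest i)$ for all $j\geq i$. Set $\beta=\beta(f\rest i)$; note $\beta$ is a successor ordinal by the remark following the definition of $\beta$.

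For the ``furthermore'' claim, I need to check that every initial segment $f\rest j$ lies in $S_{\beta-1}$. For $j\geq i$ this is immediate from the minimality clause in the definition of $\beta(f\rest j)=\beta$. For $j<i$, I split on whether $f\rest j\in S_\infty$: if it is, then $f\rest j\in S_{\beta-1}$ because $S_\infty\subseteq S_\gamma$ for every ordinal $\gamma$ (here I'd briefly note that $\beta\leq\alpha(S)$, so $\beta-1<\alpha(S)$, ensuring $S_{\beta-1}\supseteq S_\infty$); if $f\rest j\not\in S_\infty$, then Lemma~\ref{techlemma1} applied to $f\rest j\subseteq f\rest i$ gives $\beta=\beta(f\rest i)\leq\beta(f\rest j)$, so $\beta-1<\beta(f\rest j)$, which by the minimality definition forces $f\rest j\in S_{\beta-1}$. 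Either way every initial segment of $f$ lies in $S_{\beta-1}$, i.e. $f\in[S_{\beta-1}]$.

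I do not expect a serious obstacle; the proof is essentially a bookkeeping exercise on top of Lemma~\ref{techlemma1}. The one subtle point worth double-checking is the handling of $f\rest j$ for $j<i$ when $f\rest j\in S_\infty$, which requires observing that $\beta\leq\alpha(S)$ so that $S_{\beta-1}$ sits above the kernel. Beyond that, the argument reduces to combining pointwise monotonicity with the fact that a descending sequence of ordinals stabilizes.
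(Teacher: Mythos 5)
Your proposal is correct and follows essentially the same route as the paper: Lemma~\ref{techlemma1} plus well-foundedness of the ordinals for stabilization, then membership of every $f\rest j$ in $S_{\beta(f\rest i)-1}$ by minimality of $\beta$ for $j\geq i$ and by the lemma's monotonicity for $j<i$. Your only addition is spelling out the $j<i$ case via the split on whether $f\rest j\in S_{\infty}$ (using $S_{\infty}\subseteq S_{\beta-1}$), a detail the paper compresses into its citation of Lemma~\ref{techlemma1}.
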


\begin{proof}
The first part of the lemma follows from Lemma~\ref{techlemma1} and the well-foundedness
of $ORD$.
For the second part we must show $f\rest k\in S_{\beta(f\rest i)-1}$ for every $k$.
If $k\leq i$, then $f\rest k\in S_{\beta(f\rest i)-1}$ by Lemma~\ref{techlemma1}.
If $k\geq i$, then $\beta(f\rest k)=\beta(f\rest i)$ and so $f\rest k\in S_{\beta(f\rest i)-1}$
since it is in $S_{\beta(f\rest k)-1}$ by definition of $\beta$.
\end{proof}

\begin{proposition}
\label{piece2}
If $S_{\infty}=\emptyset$ then $S$ is $\boldfacedelta$.
\end{proposition}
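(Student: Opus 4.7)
The plan is to use Theorem~\ref{guesscharacterize} and produce a guesser $G$ for $S$. Since $S_\infty = \emptyset$, the ordinal $\beta(\sigma)$ is defined for every $\sigma \in \N^{<\N}$, and by the remark following the definition of $\beta$ it is always a successor, so $\beta(\sigma) - 1$ makes sense. I will exploit the combinatorial content of $\sigma \notin S_{\beta(\sigma)}$: writing $\gamma = \beta(\sigma) - 1$, so that $\sigma \in S_\gamma \setminus S_{\gamma+1}$, the failure of the defining clause of $S_{\gamma+1}$ at $\sigma$ means that one cannot simultaneously find $x', x'' \in [S_\gamma]$ extending $\sigma$ with $x' \in S$ and $x'' \notin S$. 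Equivalently, the set of infinite extensions of $\sigma$ lying in $[S_\gamma]$ is either entirely contained in $S$ or entirely disjoint from $S$ (one of these holds vacuously if no such extension exists).

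With this in mind I would define $G : \N^{<\N} \to \N$ by setting $G(\sigma) = 1$ if every $x \in [S_{\beta(\sigma)-1}]$ extending $\sigma$ lies in $S$, and $G(\sigma) = 0$ otherwise. The observation in the previous paragraph guarantees that when $G(\sigma) = 0$, either there is no extension of $\sigma$ in $[S_{\beta(\sigma)-1}]$ at all, or every such extension lies outside $S$.

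To check that $G$ is a guesser, fix $f : \N \to \N$. Since $[S_\infty] = \emptyset$, we have $f \notin [S_\infty]$, so Lemma~\ref{techlemmaX} gives an index $i$ and an ordinal $\gamma = \beta(f \rest i) - 1$ such that $\beta(f \rest j) = \gamma + 1$ for all $j \geq i$ and $f \in [S_\gamma]$. Thus for each $j \geq i$, the sequence $f$ itself is an infinite extension of $f \rest j$ lying in $[S_\gamma]$. By the dichotomy above, if $f \in S$ then every extension of $f \rest j$ in $[S_\gamma]$ must lie in $S$, so $G(f \rest j) = 1$; and if $f \notin S$ then some extension (namely $f$) fails to lie in $S$, so $G(f \rest j) = 0$. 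Either way $G(f \rest n)$ stabilizes to the correct value for $n \geq i$, so $G$ is a guesser, and $S$ is $\boldfacedelta$ by Theorem~\ref{guesscharacterize}.

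The main obstacle is not the construction itself but ensuring the stabilization argument is watertight: one must invoke Lemma~\ref{techlemmaX} to know that $\beta(f \rest n)$ eventually settles \emph{and} that $f$ stays in $[S_{\beta(f\rest i)-1}]$, for otherwise $f$ would not be a witness that forces the dichotomy to select the correct side. Both ingredients are exactly what Lemma~\ref{techlemmaX} provides, so the proof is largely a matter of reading off consequences of the machinery already built.
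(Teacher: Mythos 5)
Your proposal is correct and follows essentially the same route as the paper: define $G(\sigma)$ by the in-$S$/out-of-$S$ dichotomy among infinite extensions of $\sigma$ in $[S_{\beta(\sigma)-1}]$, then use Lemma~\ref{techlemmaX} to show $G(f\rest n)$ stabilizes to the correct value. The only difference is your handling of the case where $\sigma$ has no extension in $[S_{\beta(\sigma)-1}]$ (vacuously $1$ rather than the paper's arbitrary $0$), which is immaterial since for $j\geq i$ the sequence $f$ itself witnesses that this case never arises in the limit argument.
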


\begin{proof}
Assume $S_{\infty}=\emptyset$.  I will define a function $G:\N^{<\N}\to\N$ which guesses $S$,
which is sufficient by Theorem~\ref{guesscharacterize}.

Let $\sigma\in\N^{<\N}$, we have $\sigma\not\in S_{\infty}$ (since $S_{\infty}=\emptyset$)
and so $\sigma\in S_{\beta(\sigma)-1}\backslash S_{\beta(\sigma)}$.
Since $\sigma\not\in S_{\beta(\sigma)}$,
this means that for every two extensions $\sigma',\sigma''$ of $\sigma$ in
$[S_{\beta(\sigma)-1}]$, either $\sigma'$ and $\sigma''$ are both in $S$,
or both are outside $S$.
It might be that there \emph{is} no extension of $\sigma$ lying in
$[S_{\beta(\sigma)-1}]$.  In that case, arbitrarily define $G(\sigma)=0$.
But if there are such extensions, let $G(\sigma)=1$ if all of those extensions are in
$S$, and let $G(\sigma)=0$ if all of those extensions are outside $S$.

I claim $G$ guesses $S$.  To see this, let $f\in S$.
I will show $G(f\rest n)\rightarrow 1$ as $n\rightarrow\infty$.
Since $f\not\in [S_{\infty}]$, let $i$ be as in Lemma~\ref{techlemmaX}.
I claim $G(f\rest j)=1$ whenever $j\geq i$.
Fix $j\geq i$.
We have $\beta(f\rest j)=\beta(f\rest i)$ by choice of $i$,
and $f\in [S_{\beta(f\rest i)-1}]=[S_{\beta(f\rest j)-1}]$.
By the previous paragraph, if any infinite sequence extends $f\rest j$
and lies in $[S_{\beta(f\rest j)-1}]$, then either all such sequences
are in $S$, or all are outside $S$.  One such sequence is $f$, and it is inside $S$,
and therefore, all such sequences are inside $S$, whereby $G(f\rest j)=1$ as desired.

Identical reasoning shows that if $f\not\in S$ then $\lim_{n\to\infty}G(f\rest n)=0$.
So $G$ guesses $S$, $S$ is guessable, and by Theorem~\ref{guesscharacterize}, $S$ is $\boldfacedelta$.
\end{proof}

\begin{theorem}
\label{maintheorem}
$S$ is $\boldfacedelta$ if and only if $S_{\infty}=\emptyset$.
\end{theorem}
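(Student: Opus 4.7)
The plan is simply to combine Propositions~\ref{piece1} and~\ref{piece2}, which together cover the two directions of the biconditional. Proposition~\ref{piece1} gives the forward direction: if $S$ is $\boldfacedelta$ then $S_\infty=\emptyset$, via the contrapositive argument that a nonempty $S_\infty$ permits one to inductively diagonalize against any candidate guesser, contradicting Theorem~\ref{guesscharacterize}. Proposition~\ref{piece2} gives the converse: if $S_\infty=\emptyset$, then one can build an explicit guesser for $S$ using the ordinal invariants $\beta(\sigma)$, again applying Theorem~\ref{guesscharacterize} to conclude that $S$ is $\boldfacedelta$.

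Since both implications have already been established as standalone propositions, the proof of Theorem~\ref{maintheorem} is a one-line assembly. There is no substantive obstacle remaining at this stage; the entire technical content sits in the prior propositions (together with Lemmas~\ref{techlemma1} and~\ref{techlemmaX}, which support the construction of the guesser). Accordingly, I would write the proof as a single sentence invoking Propositions~\ref{piece1} and~\ref{piece2}, without reproducing any of their internal machinery.
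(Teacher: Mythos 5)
Your proposal matches the paper's proof exactly: Theorem~\ref{maintheorem} is obtained simply by combining Propositions~\ref{piece1} and \ref{piece2}, one for each direction of the biconditional. Nothing further is needed.
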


\begin{proof}
By combining Propositions~\ref{piece1} and \ref{piece2}.
\end{proof}

We will close by giving one more characterization of $\boldfacedelta$.

\begin{theorem}
$S$ is $\boldfacedelta$ if and only if $S=T\backslash [T_{\infty}]$ for some $T\subseteq\N^{\N}$.
\end{theorem}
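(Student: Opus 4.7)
My plan is to handle the two directions separately, with the first being essentially immediate and the second being a mild adaptation of the construction from Proposition~\ref{piece2}.

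For the forward direction, suppose $S$ is $\boldfacedelta$. I simply take $T=S$. By Theorem~\ref{maintheorem}, $S_{\infty}=\emptyset$, so $[S_{\infty}]=\emptyset$, and therefore $S=S\backslash[S_{\infty}]=T\backslash[T_{\infty}]$.

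For the backward direction, suppose $S=T\backslash[T_{\infty}]$. I will build a guesser $G:\N^{<\N}\to\N$ for $S$ and then apply Theorem~\ref{guesscharacterize}. The construction mimics that of Proposition~\ref{piece2}, but run against $T$ instead of $S$. Explicitly: for $\sigma\in T_{\infty}$, set $G(\sigma)=0$; for $\sigma\not\in T_{\infty}$, use the recipe from Proposition~\ref{piece2}, namely let $G(\sigma)=1$ if every extension of $\sigma$ in $[T_{\beta(\sigma)-1}]$ lies in $T$, let $G(\sigma)=0$ if every such extension lies outside $T$, and let $G(\sigma)=0$ by default when no such extension exists. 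Here $\beta(\sigma)$ is defined with respect to the derivative sequence of $T$.

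To verify that $G$ guesses $S$, I split on whether $f\in[T_{\infty}]$. If $f\in[T_{\infty}]$ then $f\not\in S$ by the definition of $S$, and every initial segment $f\rest n$ lies in $T_{\infty}$, so $G(f\rest n)=0$ for all $n$. If $f\not\in[T_{\infty}]$ then membership in $S$ coincides with membership in $T$, and Lemma~\ref{techlemmaX} applied to $T$ produces an index $i$ such that $\beta(f\rest j)=\beta(f\rest i)$ and $f\in[T_{\beta(f\rest i)-1}]$ for all $j\geq i$. Then $f$ itself is an extension of $f\rest j$ lying in $[T_{\beta(f\rest j)-1}]$, so by the dichotomy used in Proposition~\ref{piece2}, $G(f\rest j)=1$ when $f\in T$ and $G(f\rest j)=0$ when $f\not\in T$, as desired.

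There is essentially no obstacle: all the heavy lifting was done in Propositions~\ref{piece1} and~\ref{piece2} and Lemma~\ref{techlemmaX}, and the only new ingredient is noticing that setting $G\equiv 0$ on $T_{\infty}$ correctly handles the sequences removed from $T$ to form $S$. The one point to be careful about is that $\beta(\sigma)$ is only defined when $\sigma\not\in T_{\infty}$, which is exactly why the case split on $T_{\infty}$ needs to be made up front.
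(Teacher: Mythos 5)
Your proposal is correct and follows essentially the same route as the paper: take $T=S$ for the forward direction via Theorem~\ref{maintheorem}, and for the converse define the guesser that is $0$ on $T_{\infty}$ and otherwise applies the dichotomy from Proposition~\ref{piece2} relative to $T$, verified via Lemma~\ref{techlemmaX}. The only cosmetic difference is that you merge the paper's Cases 1 and 3 (both handled through the same dichotomy) into a single case split on whether $f\in[T_{\infty}]$.
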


\begin{proof}
By Theorem~\ref{maintheorem}, if $S$ is $\boldfacedelta$ then $S=S\backslash [S_{\infty}]$.
For the converse, it suffices to let $S$ be arbitrary and prove $S\backslash [S_{\infty}]$ is $\boldfacedelta$.

By Theorem~\ref{guesscharacterize},
it is enough to exhibit a guesser $G:\N^{<\N}\to\N$ for $S\backslash [S_{\infty}]$.
Let $\sigma\in\N^{<\N}$.
If $\sigma\in S_{\infty}$, let $G(\sigma)=0$.
Otherwise, if $\sigma$ has at least one infinite extension in $[S_{\beta(\sigma)-1}]$, and all such extensions are also in $S$,
then let $G(\sigma)=1$.
In any other case, let $G(\sigma)=0$.

We claim $G$ guesses $S\backslash [S_{\infty}]$.

Case 1: $f\in S\backslash [S_{\infty}]$.
By Lemma~\ref{techlemmaX}, find an $i$ such that for all $j\geq i$ we have $f\rest j\not\in S_{\infty}$ and
$\beta(f\rest j)=\beta(f\rest i)$
and
$f\in [S_{\beta(f\rest i)-1}]$.
Thus for any $j\geq i$, $f\rest j$ does have one extension in $[S_{\beta(f\rest j)-1}]$,
namely $f$ itself, and $f$ is in $S$.  All other such extensions must also be
in $S$, or else we would have $f\rest j\in S_{\beta(f\rest j)}$, violating the definition of $\beta$.
So $G(f\rest j)=1$, showing that $G(f\rest n)\to 1$ as $n\to\infty$.

Case 2: $f\in [S_{\infty}]$.
Then for every $n$, $f\rest n\in S_{\infty}$ and thus by definition $G(f\rest n)=0$.

Case 3:
$f\not\in S$ and $f\not\in [S_{\infty}]$.
As in Case 1, find $i$ such that for all $j\geq i$,
$f\rest j\not\in S_{\infty}$ and $\beta(f\rest j)=\beta(f\rest i)$,
and $f\in [S_{\beta(f\rest j)-1}]$.
For any $j\geq i$, $f\rest j$ has one extension in $[S_{\beta(f\rest j)-1}]$,
namely $f$, and $f\not\in S$; so by definition $G(f\rest j)=0$.
\end{proof}

\end{document}